  \def \dim{{\mbox {dim}}\,}
 \def \ga{\gamma}
 \def \Ga{\Gamma}
 \def \e{\varepsilon}
 \def \la{\lambda}
 \def \vr{\varphi}
 \def \Si{\Sigma}
 \def \th{\theta}
 \def \re{{\mathbb R}}
 \def \Rm{{\mathbb R}}
 \def \na{{\mathbb N}}
 \def \Z {{\mathbb Z}}
 \def \T {{\mathbb T}}
 \def \TTM{{\T\times TM}}
 \def \TM{{\T\times M}} 
 \def \dga{\dot\gamma}
 \def \lv{\left\vert}
 \def \rv{\right\vert}
 \def \lV{\left\Vert}
 \def \rV{\right\Vert}
 \def \ov{\overline}
 \def \then{\Longrightarrow}
 \def \lan{\langle}
 \def \ran{\rangle}
 \def \lto{\longrightarrow}
 \def \lmto{\longmapsto}
 \def \fa{\forall\;} 
 \DeclareMathOperator*\argmin{arg\, min}   
 \DeclareMathOperator\diam{diam}   
 \DeclareMathOperator\supp{supp}   
 \def \cH{{\mathcal H}} 
 \def \cO{{\mathcal O}} 
 \def \cP{{\mathcal P}}
 \def \fM{{\mathfrak  M}}
 \def \ocA{{\ov{\mathcal A}}}
 \theoremstyle{plain}
 \newtheorem{Thm}{\bf Theorem}
 \newtheorem{Lemma}[Thm]{\bf Lemma}
 \newtheorem{lem}[Thm]{\bf Lemma}
 \newtheorem{Cor}[Thm]{\bf Corollary}
 \newtheorem{Corollary}[Thm]{\bf Corollary}
 \newtheorem{Proposition}[Thm]{\bf Proposition}
 \newtheorem{prop}[Thm]{\bf Proposition}
\def\le{\leqslant}
\def\ge{\geqslant}
 \theoremstyle{definition}
 \title[]
 {A generic property of families of Lagrangian systems}
 \author{Patrick Bernard}
 \address{CEREMADE\\
          Universit\'e de Paris Dauphine\\
          Pl. du Mar\'echal de Lattre de Tassigny\\
          75775 Paris Cedex 16\\
          France\\
	  }
 \email{patrick.bernard@ceremade.dauphine.fr}
 \author{Gonzalo Contreras}
 \thanks{Gonzalo Contreras was partially supported by 
 CONACYT-M\'exico grant \# 46467-F}
   \address{CIMAT\\
           P.O. Box 402\\
           36.000 Guanajuato, GTO \\
           M\'exico.}
  \email{gonzalo@cimat.mx}
\begin{document}

 \begin{abstract}
 
  We prove that a generic lagrangian has finitely many
  minimizing measures for every cohomology class. 
 \end{abstract}

  \maketitle
  Annals of Maths, 167 No3. (2008).
  \section{Introduction}
  Let $M$ be a compact boundaryless smooth manifold. 
  
  Let $\T$ be either the group $(\re/\Z,+)$ or the trivial group
  $(\{0\},+)$. 
  
  A {\it Tonelli Lagrangian} is a $C^2$ function
  $L:\TTM\to\re$ such that 
  \begin{itemize}
  \item The restriction to each fiber of $\TTM\to\TM$ is a  {\it
  convex} function.
   \item It is fiberwise {\it superlinear}:
      $$
      \lim_{|\th|\to+\infty}{L(t,\th)}/{|\th|}=+\infty,\qquad
      (t,\th)\in\TTM.
      $$	
   \item The Euler-Lagrange equation
%    $$
%    \frac{d}{dt}\,\frac{\partial L}{\partial v}=
%    \frac{\partial L}{\partial x}
%    $$   
   $$
   \tfrac{d}{dt}L_v=L_x
   $$
   defines a {\it complete} flow $\vr:\re\times(\TTM)\lto\TTM$.   
  \end{itemize}
 We say that a Tonelli Lagrangian $L$ is \emph{strong Tonelli} if 
 $L+u$ is a Tonelli Lagrangian for each $u\in C^{\infty}(\TM,\re)$.
 When $\T=\{0\}$ we say that the lagrangian is {\it autonomous}.
  
  Let $\cP(L)$ be the set of Borel probability measures on $\TTM$
  which are invariant under the Euler-Lagrange flow $\vr$. 
  The action functional $A_L:\cP(L)\to\re\cup\{+\infty\}$ is defined
  as
  $$
  A_L(\mu):=\lan L,\mu\ran:=\int_{\TTM}L\;d\mu.
  $$
  The functional $A_L$ is lower semi-continuous and the 
  minimizers of $A_L$ on $\cP(L)$ are called {\it minimizing
  measures}.
  %correction
  % It follows (what does it follow from?)
  The ergodic components of a minimizing
  measure are also minimizing, and they
  are mutually singular,  so that
  the  set $\fM(L)$ of minimizing  measures is 
  a simplex whose extremal points are the ergodic minimizing measures.
  
  In general, the simplex $\fM(L)$ may be of infinite dimension. 
  The goal of the present paper is to prove that this is a very
 exceptional phenomenon.
 The first  results in that direction 
 %(and, up to now, the only ones)
 were obtained by Ma\~n\'e in \cite{Ma6}. 
 His paper has been very influential to our work.

  We say that a property is {\it generic} in the 
 sense of Ma\~n\'e 
if, for each  strong Tonelli Lagrangian $L$, there exists
a residual subset  $\cO\subset C^{\infty}(\TM,\re)$
such that the property holds for all the Lagrangians
$L-u,u\in \cO$. 
A set is called residual if it is a countable 
intersection of open and dense sets.
We recall which topology is used on 
$C^{\infty}(\TM,\re)$.
Denoting by $\|u\|_k$ the $C^k$-norm of a function 
$u:\TM\lto \Rm$,
define
$$
\|u\|_{\infty}:=\sum_{k\in \na}
\frac{\arctan (\|u\|_k)}{2^k}.
$$
Note that $\| . \|_{\infty}$
is not a norm.
Endow the space $C^{\infty}(\TM,\re)$
with the translation-invariant metric 
$\|u-v\|_{\infty}$.
This metric is complete, hence the Baire property holds:
any residual subset of $C^{\infty}(\TM,\re)$ is  dense.

\begin{Thm}\label{main}
Let $A$ be a finite dimensional convex family
of strong Tonelli Lagrangians. 
Then there exists a residual subset $\cO$
of $C^{\infty}(\TM,\re)$ such that, 
$$
u\in\cO,\quad L\in A \quad\then\quad
\dim\fM(L-u)\le \dim A.
$$
In other words, 
 there exist at most
$1+\dim A$ ergodic  minimizing measures of
$L-u$.
\end{Thm}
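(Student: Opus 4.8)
The plan is to reduce the statement to a problem in convex analysis and then run a Baire category argument in which the number $\dim A$ plays the role of a ``dimension budget.'' First I would replace invariant measures by closed (holonomic) measures. By Ma\~n\'e's theorem the infimum of $A_L$ over $\cP(L)$ coincides with its infimum over the convex set $\K$ of closed probability measures on $\TTM$, and the minimizers are the same; crucially $\K$ does not depend on $L$. By fiberwise superlinearity, when $(\la,u)$ ranges over a bounded family of strong Tonelli Lagrangians $L_\la-u$ the minimizing measures are supported in one fixed compact set, so the relevant part of $\K$ is weak$^*$-compact and metrizable. Since the action becomes $A_{L_\la-u}(\mu)=\lan L_\la,\mu\ran-\lan u,\mu\ran$, the set $\fM(L_\la-u)$ is exactly the face of $\K$ on which $\mu\mapsto\lan L_\la-u,\mu\ran$ attains its minimum, and $\dim\fM(L_\la-u)$ is the affine dimension of this face.

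Next I would encode the dimension analytically. Fix an affine parametrization $\la\mapsto L_\la$ of $A$ by a convex subset of $\re^{\dim A}$ and set
$$
\Psi(\la,u):=\min_{\mu\in\K}\lan L_\la-u,\mu\ran,
$$
which, being an infimum of functions affine in $(\la,u)$, is concave. By the envelope theorem the superdifferential of $\Psi$ in the $u$-variable is the image of $\fM(L_\la-u)$ under $\mu\mapsto-\mu$. Since closed measures are separated by $C^\infty$ functions, $\dim\fM(L_\la-u)>\dim A$ holds iff, for some $F=\{v_1,\dots,v_{\dim A+1}\}$ chosen from a fixed countable dense subset of $C^\infty(\TM,\re)$, the projection $\Pi_F\colon\mu\mapsto(\lan v_i,\mu\ran)_i$ maps $\fM(L_\la-u)$ onto a set of full dimension $\dim A+1$. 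I would therefore define, for each such $F$ and each member $A_n$ of a compact exhaustion $A=\bigcup_n A_n$, the bad set
$$
\mathcal B_{F,n}:=\bigl\{u:\exists\,\la\in A_n,\ \dim\Pi_F\bigl(\fM(L_\la-u)\bigr)\ge\dim A+1\bigr\},
$$
so that the exceptional set of $u$ is the countable union $\bigcup_{F,n}\mathcal B_{F,n}$. It then suffices to show each $\mathcal B_{F,n}$ is closed and nowhere dense; the complement of their union is the desired residual $\cO$.

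The heart is the nowhere density of $\mathcal B_{F,n}$, and this is where $\dim A$ enters as a budget. Closedness follows from outer semicontinuity of the minimizing face (weak$^*$-limits of minimizers are minimizers, by lower semicontinuity of the action and the uniform compact support) together with compactness of $A_n$, which make $\dim\Pi_F\fM$ upper semicontinuous in $(\la,u)$. For nowhere density I would slice: decompose $C^\infty(\TM,\re)=X'\oplus W$ with $W=\mathrm{span}\,F\cong\re^{\dim A+1}$, fix $u'\in X'$, and consider the concave function
$$
g(\la,s):=\Psi\Bigl(\la,\;u'+\textstyle\sum_i s_i v_i\Bigr),\qquad(\la,s)\in\re^{\dim A}\times\re^{\dim A+1},
$$
whose $s$-superdifferential equals $-\Pi_F(\fM(L_\la-u))$. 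Thus a point of the slice lies in $\mathcal B_{F,n}$ exactly when $g$ has a full $(\dim A+1)$-dimensional $s$-superdifferential for some $\la$. By the classical fact that for a concave function on $\re^m$ the set where the superdifferential has dimension $\ge k$ has Hausdorff dimension $\le m-k$, such $(\la,s)$ form a set of dimension $\le(2\dim A+1)-(\dim A+1)=\dim A$; projecting to the $s$-coordinates (which cannot raise dimension) gives a subset of $\re^{\dim A+1}$ of dimension $\le\dim A$, hence of Lebesgue measure zero. So $\mathcal B_{F,n}$ meets every slice $u'+W$ in a null set, and a closed set with this property cannot contain a metric ball (which would contain a relatively open, positive-measure piece of some slice); hence $\mathcal B_{F,n}$ is nowhere dense.

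The main obstacle is precisely this last passage: in the Fr\'echet space $C^\infty(\TM,\re)$ there is no translation-invariant measure, so the finite-dimensional count only yields measure-zero information, which does not by itself imply meagerness. The device that turns the budget into a genuine Baire-category statement is the combination of the global closedness of $\mathcal B_{F,n}$ with the measure-zero slice estimate; obtaining closedness in turn requires the uniform a priori compactness of Mather sets over the compact family $A_n$ and bounded $u$, which I would extract from fiberwise superlinearity. Granting the reduction to $\K$ and this compactness, the dimension count delivers the exact bound $\dim\fM(L-u)\le\dim A$, that is, at most $1+\dim A$ ergodic minimizing measures.
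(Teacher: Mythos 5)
The decisive gap is your claim that $\mathcal B_{F,n}$ is closed. You derive it from the assertion that outer semicontinuity of the minimizing face makes $(\la,u)\mapsto \dim \Pi_F\bigl(\fM(L_\la-u)\bigr)$ upper semicontinuous; this is false. Outer semicontinuity of the argmin correspondence says that the limit face contains every limit of minimizers, but it does not prevent the faces themselves from degenerating: a sequence of $(\dim A+1)$-dimensional faces can shrink to a point. (Already for linear minimization on a compact convex set of measures: if $f_j$ attains its minimum exactly on a set $S_j$ shrinking to a singleton, the face $\{\mu: \supp\mu\subset S_j\}$ is huge for every $j$ while the limit face is a point; in the dynamical setting this is just the familiar fact that a sequence of Lagrangians with several tied minimizing measures can converge to one with a unique minimizing measure.) Hence $u_j\in\mathcal B_{F,n}$, $u_j\to u$ does not force $u\in\mathcal B_{F,n}$, and --- as you yourself stress --- without closedness the null-slice estimate yields no Baire-category information in the Fr\'echet space $C^\infty(\TM,\re)$. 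The missing idea, which is exactly the heart of the paper's topological argument, is to replace the dimension condition by a quantified condition of the right topological type: in Theorem~\ref{abstract} the good set $\cO(D,\e,k)$ consists of the $u$ for which $M_K(L-u)$ lies in the $\e$-neighborhood of some $k$-dimensional convex subset of $K$; this condition \emph{is} open (it uses only upper semicontinuity of the argmin map, with no reference to dimension of the limit face), its density is proved by essentially your Legendre-transform/Hausdorff-dimension count (Lemma~\ref{map} plus the Alberti--Ambrosio bound), and intersecting over $\e>0$ recovers $\dim\le k$. Alternatively, your scheme could be repaired by defining the bad sets through the genuinely closed condition ``$\Pi_F\bigl(\fM(L_\la-u)\bigr)$ contains a ball of radius $\delta$'' and taking countable unions over rational $\delta$; but some such $\e$- or $\delta$-quantification is indispensable, and it is precisely what your write-up lacks.

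There is a second gap: the assertion that ``closed measures are separated by $C^\infty$ functions'' is wrong in the form you need. Your test functions $v_i$, like the perturbations $u$, must live on the base $\TM$ (otherwise the splitting $C^\infty(\TM,\re)=X'\oplus W$ with $W=\mathrm{span}\,F$ makes no sense), and then $\lan v_i,\mu\ran$ depends only on the projected measure $\pi_*\mu$. Distinct closed measures can have equal projections (the same loop traversed at different speeds), so your equivalence between $\dim\fM(L_\la-u)>\dim A$ and the existence of some $F$ with full-dimensional image fails for general closed measures, and your dimension count only ever bounds $\dim\pi_*\fM(L_\la-u)$. What rescues the conclusion is Mather's graph theorem --- the projection $\TTM\to\TM$ is injective on the union of supports of minimizing measures --- which is the content of the paper's Lemma~\ref{measures} and must be invoked explicitly. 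Relatedly, your appeal to a uniform compact support bound over ``bounded'' families of $(\la,u)$ is delicate, since in the Fr\'echet metric the whole space is bounded; the paper sidesteps this by working, for each $n$, with the fixed compact set $H_n$ of holonomic measures supported in $\{|\th|\le n\}$, obtaining a residual set for each $n$ and intersecting, with Mather's compactness used only to place each individual $\fM(L-u)$ inside some $H_n$.
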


The main result of Ma\~n\'e in  \cite{Ma6}
is that  having 
a unique  minimizing measure is a 
generic property.
This corresponds to the case where $A$ is a point
in our statement.
Our generalization of Ma\~n\'e's result is motivated
by the following construction due to John Mather:

We can view a 1-form on $M$ as a function on $TM$ which 
is linear on the fibers. If
$\lambda$ is closed, 
the Euler-Lagrange equation
of the Lagrangian $L-\lambda$ is the same as that of $L$.
However, the minimizing measures of $L-\lambda$,
are not the same as the minimizing measures of $L$.
Mather proves in~\cite{Mat5} that the set $\fM(L-\la)$
of minimizing measures of the lagrangian $L-\la$ depends only
on the cohomology class $c$ of $\la$. If $c\in H^1(M,\re)$ we write
$\fM(L-c):=\fM(L-\la)$, where $\la$ is a closed form of cohomology
$c$.
%We call
%$$
%\fN(L):=\bigcup_{c\in H^1(M,\re)}\fM(L-c)
%$$
%the set of all {\it Mather minimizing measures} of $L$.

It turns out that important applications of Mather theory, such as
the existence of  orbits wandering in phase space, require
understanding  not only of the set $\fM(L)$ of minimizing measures 
for a fixed or generic cohomology classes but of the set 
of all Mather minimizing measures for every $c\in H^1(M,L)$.
The following corollaries are crucial for  these
applications.

\begin{Cor}
The following property is generic in the sense of Ma\~ n\'e:

For all $c\in H^1(M,\Rm)$, there are at most
$1+\dim H^1(M,\Rm)$ ergodic minimizing measures of
$L-c$.
\end{Cor}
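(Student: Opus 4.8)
The plan is to deduce the corollary directly from Theorem~\ref{main} by applying it to a single, well-chosen finite dimensional convex family $A$ that simultaneously realizes every cohomology class. Set $b:=\dim H^1(M,\Rm)$ and, using de Rham theory, fix smooth closed $1$-forms $\la_1,\dots,\la_b$ on $M$ whose cohomology classes form a basis of $H^1(M,\Rm)$. Regarding each $\la_i$ as a fiberwise-linear function on $TM$, I would define
$$
A:=\Bigl\{\,L-\textstyle\sum_{i=1}^{b}a_i\la_i \;:\; (a_1,\dots,a_b)\in\Rm^b\,\Bigr\},
$$
an affine, hence convex, family.

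First I would check that $A$ consists of strong Tonelli Lagrangians and that $\dim A=b$. For the former, adding a fiberwise-linear function preserves fiberwise convexity (the linear term does not change the fiberwise Hessian) and fiberwise superlinearity (a linear term is dominated by the superlinear growth), and, since each $\la_i$ is closed, the Euler--Lagrange equation of $L-\sum a_i\la_i$ coincides with that of $L$, so its flow is complete; applying the same remarks to $L+u$ shows the strong Tonelli property is inherited from $L$. For the dimension, linear independence of the classes $[\la_i]$ forces the forms $\la_i$ themselves to be linearly independent, so the parametrization $(a_i)\mapsto L-\sum a_i\la_i$ is injective and $\dim A=b=\dim H^1(M,\Rm)$.

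Now I would apply Theorem~\ref{main} to $A$, obtaining a residual set $\cO\subset C^{\infty}(\TM,\re)$ with $\dim\fM(L'-u)\le \dim A=b$ for every $u\in\cO$ and every $L'\in A$. Fix $u\in\cO$ and an arbitrary class $c\in H^1(M,\Rm)$; writing $c=[\sum a_i\la_i]$ in the chosen basis and taking $L'=L-\sum a_i\la_i\in A$, we have $L'-u=(L-u)-\sum a_i\la_i$, so by Mather's cohomological invariance $\fM(L'-u)=\fM\bigl((L-u)-c\bigr)$. Hence $\dim\fM\bigl((L-u)-c\bigr)\le b$, and since $\fM$ is a simplex whose extremal points are the ergodic minimizing measures, $(L-u)-c$ has at most $1+b=1+\dim H^1(M,\Rm)$ ergodic minimizing measures. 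As $c$ was arbitrary and this holds for all $u$ in the residual set $\cO$, the property is generic in the sense of Ma\~n\'e.

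The construction itself is essentially the whole content, so there is no serious analytic obstacle; the two points requiring care are the verification that the fiberwise-linear perturbations keep every member of $A$ strong Tonelli, so that Theorem~\ref{main} genuinely applies, and the observation that a single basis of closed forms lets one finite dimensional family cover \emph{all} cohomology classes at once. This last point is exactly what upgrades the fixed-class conclusion of Theorem~\ref{main} into the ``for all $c$'' statement.
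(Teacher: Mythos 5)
Your proof is correct and follows exactly the route the paper intends: the corollary is stated as an immediate consequence of Theorem~\ref{main} applied to the affine family $\bigl\{L-\sum_i a_i\la_i\bigr\}$ spanned by closed $1$-forms representing a basis of $H^1(M,\Rm)$, combined with Mather's result (cited just before the corollary) that $\fM(L-\la)$ depends only on the class $[\la]$. Your checks that this family consists of strong Tonelli Lagrangians and has dimension $\dim H^1(M,\Rm)$ simply make explicit the details the paper leaves to the reader.
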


We say that a property is of infinite codimension
if, for each finite dimensional convex family  $A$ of 
strong Tonelli Lagrangians,
there exists  a residual subset $\cO$
in $C^{\infty}(\T\times M,\re)$ such that
none of the Lagrangians $L-u$, $L\in A$, $u\in \cO$
 satisfy the property.

\pagebreak

\begin{Cor}
The following property is of infinite codimension:

There exists $c\in H^1(M,\Rm)$,
such that
$L-c$ has infinitely many ergodic minimizing measures.
\end{Cor}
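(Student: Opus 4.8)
The plan is to deduce this from Theorem~\ref{main} by enlarging $A$ so that it already contains every cohomological twist. Two facts drive the argument: $H^1(M,\re)$ is finite dimensional because $M$ is compact, and subtracting a \emph{closed} $1$-form leaves the Euler--Lagrange flow unchanged, hence preserves the strong Tonelli property.

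Given a finite dimensional convex family $A$, first I would fix closed $1$-forms $\la_1,\dots,\la_b$ whose classes form a basis of $H^1(M,\re)$, where $b=\dim H^1(M,\re)$, and let $\La$ denote the $b$-dimensional vector space they span. Regarding each $\la_i$ as a fiberwise linear function on $TM$, I set
$$A':=A-\La=\{\,L-\la: L\in A,\ \la\in\La\,\},$$
which is again convex (a Minkowski sum of a convex set and a vector space) and satisfies $\dim A'\le \dim A+b$. I would then check that every $L-\la\in A'$ is strong Tonelli: for $u\in C^\infty(\TM,\re)$ one has $L-\la+u=(L+u)-\la$, where $L+u$ is Tonelli; subtracting the fiberwise linear form $\la$ preserves fiberwise convexity and superlinearity and, since $\la$ is closed, does not change the Euler--Lagrange equation, so the flow of $(L+u)-\la$ coincides with that of $L+u$ and is complete. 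Thus $A'$ is a finite dimensional convex family of strong Tonelli Lagrangians.

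Applying Theorem~\ref{main} to $A'$ yields a residual subset $\cO\subset C^\infty(\TM,\re)$ such that every $L''-u$ with $L''\in A'$, $u\in\cO$ has at most $1+\dim A'$ ergodic minimizing measures. I claim this $\cO$ witnesses the infinite codimension of the property. Indeed, fix $L\in A$, $u\in\cO$ and any $c\in H^1(M,\re)$; writing $\la_c\in\La$ for the representative of $c$ in our basis, Mather's theorem gives $\fM(L-u-c)=\fM(L-u-\la_c)$, and $L-u-\la_c=(L-\la_c)-u$ with $L-\la_c\in A'$. Hence $L-u-c$ has at most $1+\dim A'\le 1+\dim A+\dim H^1(M,\re)$ ergodic minimizing measures. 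This bound is finite and \emph{independent of $c$}, so no Lagrangian $L-u$, $L\in A$, $u\in\cO$, satisfies the property, as required.

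The only genuine obstacle is verifying that subtracting a closed form preserves the strong Tonelli property; this rests on the classical fact, already exploited by Mather, that a $1$-form contributes $d\la$ to the Euler--Lagrange equation and therefore nothing when $\la$ is closed. Once that is granted, the corollary is a direct consequence of Theorem~\ref{main}, with the finiteness of $\dim H^1(M,\re)$ supplying the essential input.
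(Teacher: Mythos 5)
Your proposal is correct and follows exactly the route the paper intends (the paper states this corollary without a written proof, as a direct consequence of Theorem~\ref{main}): enlarge the given family $A$ to $A-\La$ by a finite dimensional space of closed $1$-forms representing $H^1(M,\re)$, check that this remains a finite dimensional convex family of strong Tonelli Lagrangians since closed forms do not alter the Euler--Lagrange flow, apply Theorem~\ref{main}, and invoke Mather's invariance $\fM(L-c)=\fM(L-\la_c)$ to get a uniform finite bound over all $c$. Your verification of the strong Tonelli property for $L-\la$ and the observation that the bound $1+\dim A+\dim H^1(M,\re)$ is independent of $c$ are precisely the points that make the deduction work.
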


Another important  issue concerning variational methods for 
Arnold diffusion questions is the total disconnectedness of
the quotient Aubry set. John Mather proves in~\cite[\S~3]{Mat10}
that the quotient Aubry set $\ocA$ of any Tonelli lagrangian on $\TTM$ 
with $\T=\re/\Z$ and $\dim M\le 2$  (or with $\T=\{0\}$ and 
$\dim M\le 3$) is totally disconnected. See~\cite{Mat10} for its definition. 

The elements of the quotient Aubry set are called {\it static
classes}. They are disjoint subsets of $\TTM$ and each static class
supports at least one ergodic minimizing measure. We then get

\begin{Cor}
The following property is generic in the sense of Ma\~ n\'e:

%\noindent
For all $c\in H^1(M,\re)$  the quotient Aubry set
$\ocA_c$ of $L-c$ has at most $1+\dim H^1(M,\re)$ elements.
\end{Cor}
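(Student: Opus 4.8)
The plan is to derive this corollary as a short counting consequence of the first Corollary stated above (the one bounding the number of ergodic minimizing measures of $L-c$), the bridge being the structural fact recalled in the excerpt: every static class of $L-c$ supports at least one ergodic minimizing measure. What must be added is that this correspondence is injective when read from the side of the static classes.

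First I would fix a cohomology class $c\in H^1(M,\re)$ and recall Mather's description of $\ocA_c$: its elements, the static classes, are the equivalence classes of a relation on the (compact) Aubry set of $L-c$, hence pairwise disjoint, closed, flow-invariant subsets of $\TTM$. The support of any ergodic minimizing measure of $L-c$ is a closed flow-invariant subset of the Aubry set, and since the static classes are disjoint closed invariant sets, ergodicity forces such a support to lie inside a single static class. Thus each ergodic minimizing measure of $L-c$ is carried by exactly one element of $\ocA_c$.

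Second, combining this with the stated fact that each static class carries at least one ergodic minimizing measure, I would choose for every static class one ergodic minimizing measure supported on it. This defines a map from $\ocA_c$ into the set of ergodic minimizing measures of $L-c$ which is injective: the measures attached to two distinct static classes have supports in disjoint sets, hence are distinct. Consequently
$$
\#\,\ocA_c \;\le\; \#\{\mbox{ergodic minimizing measures of }L-c\},
$$
and this inequality holds for every Tonelli Lagrangian and every $c$.

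Finally I would invoke the first Corollary: for the given strong Tonelli Lagrangian $L$ it produces a residual set $\cO\subset C^{\infty}(\TM,\re)$ such that, for every $u\in\cO$ and every $c$, the Lagrangian $(L-u)-c$ has at most $1+\dim H^1(M,\re)$ ergodic minimizing measures. Applying the displayed inequality with $(L-u)-c$ in place of $L-c$, the same $\cO$ shows that for all $c$ the quotient Aubry set of $(L-u)-c$ has at most $1+\dim H^1(M,\re)$ elements, which is exactly the claimed genericity. I expect no serious obstacle: the single non-formal ingredient is that an ergodic minimizing measure is supported in one static class, a standard feature of Mather's construction \cite{Mat10}; everything else is counting.
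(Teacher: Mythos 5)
Your proposal is correct and follows essentially the same route as the paper: the paper obtains this corollary precisely by combining the first Corollary with the two facts it states just beforehand --- that static classes are disjoint and that each supports at least one ergodic minimizing measure --- which yields the injection from $\ocA_c$ into the set of ergodic minimizing measures of $L-c$, exactly as you argue. Your additional observation that each ergodic minimizing measure lies in a single static class is true but not needed for this counting direction.
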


  \section{Abstract Results}
  
   Assume that we
  are given 
  \begin{itemize}
  \item Three topological vector spaces $E$, $F$, $G$.
  \item A continuous linear map $\pi:F\to G$.
  \item A bilinear pairing $\langle u,\nu\rangle:E\times G\to\re$.
%correction
  \item Two metrizable convex  compact subsets $H\subset F$ and $K\subset G$
  such that $\pi(H)\subset K$.
  \end{itemize}
  
  Suppose that
  \begin{enumerate}
  \renewcommand\theenumi{\roman{enumi}}
  \item The map 
   $$
   E\times K\ni (u,\nu)\longmapsto \lan u,\nu\ran
   $$
  is continuous.
  
  \medskip

  We will also denote $\lan u,\pi(\mu)\ran$ by $\lan u,\mu\ran$
  when $\mu\in H$. Observe that each element $u\in E$ gives rise to a
  linear functional on $F$
  $$
  F \ni \mu\longmapsto\lan u,\mu\ran
  $$
  which is continuous on $H$. We shall denote by $H^*$ the set of
  affine and continuous functions  on $H$ and use the
  same symbol $u$ for an element of $E$ and for the element
  $\mu\longmapsto\lan u,\mu\ran$ of $H^*$ which is associated to it.
  
  \bigskip
  
  \item The compact $K$ is separated by $E$. This means that,
        if $\eta$ and $\nu$ are two different points of $K$,
        then there exists a point $u$ in  $E$
        such that $\langle u,\eta\rangle \neq \langle u,\nu\rangle $.

    \medskip

         Note that the topology on $K$ is then the
         weak topology associated to $E$. 
	 A sequence $\eta_n$ of elements of $K$ 
	 converges to $\eta$ if and only if we have
         $
         \langle u,\eta_n\rangle\lto \langle u,\eta\rangle
         $
         for each $u \in E$. We shall, for notational conveniences,
         fix once and for all a metric $d$ on $K$.

       \bigskip

 \item $E$ is a Frechet space. It means that $E$ is a topological 
       vector space whose topology is defined by a 
       translation-invariant metric, and that $E$ is complete 
       for this metric.
       
       \medskip
       
       Note then that $E$ has the Baire property.
       We say that a subset is residual
       if it is a countable intersection of open and dense sets.
       The Baire property says that any residual subset of $E$ 
       is dense.

  \end{enumerate}

      Given $L\in H^*$ denote by
      $$
      M_H(L) := \argmin_H L
      $$
      the set of points $\mu\in H$ which minimize $L|_H$, 
      and by $M_K(L)$ the image $\pi (M_H(L))$.
      These are compact convex  subsets of $H$ and $K$.

Our main abstract result is:

\begin{Thm}\label{abstract}
For every finite dimensional affine subspace  $A$ of $H^*$,
there exists a residual subset $\cO(A)\subset E$
such that, for all $u\in \cO(A)$ and all $L\in A$,
we have
\begin{equation}\label{dim}
\dim M_K(L-u) \leq \dim A.
\end{equation}
\end{Thm}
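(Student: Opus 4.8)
The plan is to restate the theorem entirely in terms of subdifferentials of concave functions and then to reduce it, after a descent to finitely many directions, to a finite-dimensional fact about the size of the subdifferential of a convex function.

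\emph{Step 1 (the concave function).} For a fixed $L\in H^*$ I set $\phi_L(u):=\min_{\mu\in H}\big(L(\mu)-\lan u,\mu\ran\big)$. As an infimum of functions that are affine in $u$, $\phi_L$ is concave and continuous on $E$ by hypothesis (i). The first thing to check is the identification $\partial\phi_L(u)=-\{\lan\cdot,\nu\ran:\nu\in M_K(L-u)\}$: a supergradient at $u$ is exactly $-\lan\cdot,\mu\ran$ for some minimizer $\mu\in M_H(L-u)$, and by hypothesis (ii) the affine map $\nu\mapsto\lan\cdot,\nu\ran$ embeds $K$ into the dual of $E$. Hence $\dim M_K(L-u)=\dim\partial\phi_L(u)$, and for finite dimensional $W\subset E$ the projected subdifferential obeys $\dim\,\mathrm{pr}_{W^*}\partial\phi_L(u)=\dim P_W M_K(L-u)$, where $P_W$ denotes evaluation against a basis of $W$. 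The theorem becomes a statement about where this subdifferential is large.

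\emph{Step 2 (where the codimension $\dim A$ enters).} Fix finite dimensional $W\subset E$ and consider $f(u,L)=\min_H(L-u)$ on $W\times A$, which is jointly concave since $(u,L)\mapsto L(\mu)-\lan u,\mu\ran$ is affine. Because $\mathrm{pr}_{W^*}\partial f(u,L)=-P_W M_K(L-u)$, one has $\dim\partial f(u,L)\ge\dim P_W M_K(L-u)$, so the locus where $\dim P_W M_K\ge 1+\dim A$ lies inside $\Sigma:=\{(u,L):\dim\partial f(u,L)\ge 1+\dim A\}$. Here I would invoke the finite-dimensional theorem (of Zaj\'{\i}\v{c}ek--Pavlica type) that the set on which a convex function on $\re^N$ has subdifferential of dimension $\ge k$ is covered by countably many Lipschitz graphs of dimension $N-k$. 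With $N=\dim W+\dim A$ and $k=1+\dim A$ these graphs have dimension $\dim W-1$, so their projection to the $u$-factor $W$ (a Lipschitz map) stays of dimension $<\dim W$, hence Lebesgue-null. The threshold $1+\dim A$ is forced: a smaller one would permit surfaces of dimension $\ge\dim W$ whose projection could fill $W$. This is precisely how the slack $\dim A$—and the impossibility of a better bound—appears, the point being that a single residual set must serve all $L\in A$ at once, which in Ma\~n\'e's case ($\dim A=0$) gives uniqueness.

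\emph{Step 3 (descent to finitely many directions and assembly).} Since $K$ is metrizable and compact in the weak topology from $E$, a countable family $e_1,e_2,\dots\in E$ separates $K$; with $W_k=\mathrm{span}(e_1,\dots,e_k)$ one gets $\dim M_K(L-u)=\sup_k\dim P_{W_k}M_K(L-u)$. Exhausting $A$ by compact balls $A_R$, the bad set $\{u:\exists L\in A,\ \dim M_K(L-u)\ge 1+\dim A\}$ is the countable union over $k,R$ of $N_{k,R}:=\{u:\exists L\in A_R,\ \dim P_{W_k}M_K(L-u)\ge 1+\dim A\}$. Because $\dim$ is not semicontinuous, I would replace each $N_{k,R}$ by its quantitative pieces: demand $1+\dim A$ minimizers whose $P_{W_k}$-images span a simplex of volume $\ge q$, $q\in\mathbb{Q}_{>0}$. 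This condition is closed in $(u,L)$ by compactness of $H$ and $A_R$, so its projection $F_{k,R,q}$ is closed in $E$ and $N_{k,R}=\bigcup_q F_{k,R,q}$. Step 2, applied on any slice $V=u_0+W'$ with $W'\supseteq W_k$, shows $N_{k,R}\cap V$ is meagre in $V$, so its complement is dense; thus each closed $F_{k,R,q}$ has dense complement, i.e. is nowhere dense, the bad set is meagre, and its complement is the required $\cO(A)$.

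\emph{Main obstacle.} The real difficulty is not the elementary convexity but transporting the finite-dimensional null/meagre estimate of Step 2 into a Baire-category statement on the infinite-dimensional $E$: the set $M_H(L-u)$ depends on $u$ globally, $\dim M_K(L-u)$ is neither upper nor lower semicontinuous, and a set meagre on every finite dimensional slice need not be meagre. The device that saves the argument is the quantitative ($q$-)thickening of the bad condition, which restores closedness of each piece and lets ``meagre on slices'' upgrade to ``empty interior'' for a genuinely closed subset of $E$; making the separating family $e_k$, the compact exhaustion $A_R$, and the Lipschitz-surface bound interact correctly is exactly where the care lies.
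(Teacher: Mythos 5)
Your proposal is correct in substance, and its finite\hyphenation{di-men-sion-al}-dimensional core is in fact identical to the paper's: your concave value function $f(u,L)=\min_H(L-u)$, restricted to a slice $(w+W)\times A$, is exactly the negative of the paper's Legendre-type function $G_m(L,y)=\max_{\mu\in H}\langle w+y_1w_1+\cdots+y_mw_m,\mu\rangle-L(\mu)$, and both arguments then invoke the same convex-analysis fact (the paper's appendix; your Zaj\'{\i}\v{c}ek-type covering) that the locus where a convex function on an $N$-dimensional space has subdifferential of dimension $\ge 1+\dim A$ has Hausdorff dimension at most $N-1-\dim A$, followed by projection to the $u$-variables. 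Where you genuinely diverge is the globalization to a Baire statement on $E$. The paper writes $\cO(A)$ as a countable intersection of sets $\cO(D,\varepsilon,\dim A)$ defined by an \emph{open} condition --- $M_K(L-u)$ lies in the $\varepsilon$-neighborhood of some $\dim A$-dimensional convex set --- with openness coming from upper semicontinuity of the argmin correspondence (Lemma~\ref{semicontinuity}) and density from perturbing $w$ along separating functionals chosen so that the fibers of $T_m$ have diameter $<\varepsilon$ (Lemma~\ref{map}). You instead decompose the bad set into \emph{closed} pieces $F_{k,R,q}$ --- closed because they are projections of closed sets along the compact factor $A_R\times H^{2+\dim A}$ --- and kill their interiors by intersecting with the finite-dimensional slices $u_0+W_k$. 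Your route dispenses entirely with the $\varepsilon$-neighborhood device and the diameter lemma, using instead the exact identity $\dim M_K(L-u)=\sup_k\dim P_{W_k}M_K(L-u)$; the price is the quantitative-witness/closed-projection argument. The paper's route gets openness cheaply but needs the USC lemma and the $\varepsilon$-approximation. Both devices answer the same obstruction, which you correctly isolate: $\dim$ is not semicontinuous, and smallness on finite-dimensional slices does not globalize by itself.

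Three small repairs, none structural. (1) A $(1+\dim A)$-dimensional simplex has $2+\dim A$ vertices, so your witnesses must be $2+\dim A$ minimizers, not $1+\dim A$. (2) In Step 3, ``meagre in $V$'' should be ``Lebesgue-null in $V$'': that is what the Hausdorff-dimension bound gives, and it suffices, since a null set has dense complement and your pieces $F_{k,R,q}\cap V$ are moreover closed in $V$, hence nowhere dense there. (3) The identity $\dim M_K=\sup_k\dim P_{W_k}M_K$ deserves its one-line proof: an affine dependence $\sum_i c_i P_{W_k}(\nu_i)=0$, $\sum_i c_i=0$, $c\neq 0$, surviving all $k$ yields in the limit a relation $\langle e_j,\mu_+\rangle=\langle e_j,\mu_-\rangle$ for all $j$, where $\mu_\pm\in K$ are the convex combinations formed from the positive and negative parts of $c$; separation of $K$ by $E$ forces $\mu_+=\mu_-$, i.e.\ the $\nu_i$ were affinely dependent.
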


 \begin{proof}
 
 We define the $\e$-neighborhood $V_{\e}$ of a
subset $V$ of $K$ as the union of all the open balls in $K$ 
which have radius $\e$ and are centered in $V$.
Given a subset $D\subset A$, a positive number $\e$,
and a positive integer $k$,  denote by
$\cO(D,\e,k)\subset E$
the set of points $u\in E$
such that, for each $L\in D$, the convex set
$M_K(L-u)$ is contained in the $\e$-neighborhood
of some $k$-dimensional convex subset of $K$.

We shall prove that the theorem holds with
$$
\cO(A)=\bigcap_{\e>0} \cO(A,\e,\dim A).
$$
If $u$ belongs to $\cO(A)$, then
(\ref{dim}) holds  for every $L\in A$.
Otherwise, for some $L\in A$, the convex set  $M_K(L-u) $
would contain a ball of dimension $\dim A+1$,
 and, if $\e$ is small enough, such a ball is not
 contained in the $\e$-neighborhood
 of any convex set of dimension $\dim A$.

So we have to prove that $\cO(A)$ is residual.
In view of the Baire property, it is enough to
check that, for any compact subset $D\subset A$
and any positive $\e$,
the set $\cO(D,\e,\dim A)$
is open and dense.
We shall prove in \ref{open} that it is open,
 and in \ref{dense} that it is dense.

 \end{proof}

 \subsection{Open}\label{open}\quad
 
 We prove that, for any $k\in\Z^+$, $\e>0$ and any 
 compact $D\subset A$, the set $\cO(D,\e,k)\subset E$
 is open. We need a Lemma.

\begin{lem}
\label{semicontinuity}
The set-valued map $(L,u)\lmto  M_H(L-u)$
is upper semi-continuous on $A\times E$.
This means that for any open subset $U$ of $H$, the set 
$$
\{(L,u)\in A\times E \;:\;
M_H(L-u)\subset U\}
\subset A\times E
$$
is open in $A\times E$.
Consequently, the set-valued map 
$(L,u)\lmto  M_K(L-u)$
is also upper semi-continuous.
\end{lem}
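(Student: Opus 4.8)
The plan is to prove the upper semi-continuity of the set-valued map $(L,u)\lmto M_H(L-u)$ by a direct compactness argument, exploiting that $H$ is compact and that the action of minimizing is a "continuous selection of minima" of a jointly continuous family of affine functionals. First I would argue by contradiction: suppose the target set $\{(L,u): M_H(L-u)\subset U\}$ is not open for some open $U\subset H$. Then there is a point $(L_0,u_0)$ in this set, together with a sequence $(L_n,u_n)\to(L_0,u_0)$ in $A\times E$ such that $M_H(L_n-u_n)\not\subset U$ for every $n$. Hence I can pick $\mu_n\in M_H(L_n-u_n)$ with $\mu_n\notin U$. Since $H$ is compact and metrizable, after passing to a subsequence I may assume $\mu_n\to\mu_\infty\in H$, and because the complement of $U$ is closed, $\mu_\infty\notin U$.

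The key step is then to show that $\mu_\infty\in M_H(L_0-u_0)$, which contradicts $M_H(L_0-u_0)\subset U$. To see this I would use the defining minimality: for every $\mu\in H$ and every $n$,
$$
(L_n-u_n)(\mu_n)\le (L_n-u_n)(\mu).
$$
Here each $L_n\in A\subset H^*$ is affine and continuous on $H$, and I would need the family $\{L_n\}$ to converge to $L_0$ uniformly on the compact $H$; since $A$ is finite dimensional and $L_n\to L_0$ in $A$, this uniform convergence holds on the compact set $H$. Likewise, the pairing $\lan u,\mu\ran=\lan u,\pi(\mu)\ran$ is, by hypothesis (i), continuous on $E\times K$, and since $\pi(H)\subset K$ is compact the map $(u,\mu)\mapsto\lan u,\mu\ran$ is continuous on $E\times H$; thus $u_n\to u_0$ in $E$ forces $\lan u_n,\mu_n\ran\to\lan u_0,\mu_\infty\ran$ and $\lan u_n,\mu\ran\to\lan u_0,\mu\ran$ for each fixed $\mu$. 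Passing to the limit in the inequality above yields
$$
(L_0-u_0)(\mu_\infty)\le (L_0-u_0)(\mu)\quad\text{for all }\mu\in H,
$$
so $\mu_\infty\in M_H(L_0-u_0)$, the desired contradiction.

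I expect the main obstacle to be the interchange of limits at the two "moving" arguments simultaneously: one must pass to the limit in $(L_n-u_n)(\mu_n)$ where both the functional index $n$ and the point $\mu_n$ vary, rather than just a fixed evaluation point. This is where I would use uniform-on-$H$ convergence of $L_n-u_n$ to $L_0-u_0$ (a genuine uniform estimate, guaranteed by the finite-dimensionality of $A$ and the joint continuity of the pairing on the compact $\pi(H)$), so that $|(L_n-u_n)(\mu_n)-(L_0-u_0)(\mu_n)|\to 0$ and then $(L_0-u_0)(\mu_n)\to(L_0-u_0)(\mu_\infty)$ by continuity of the single limit functional. The remaining assertion, that upper semi-continuity of $(L,u)\mapsto M_H(L-u)$ implies the same for $(L,u)\mapsto M_K(L-u)=\pi(M_H(L-u))$, follows immediately: if $W\subset K$ is open then $\pi^{-1}(W)$ is open in $F$ by continuity of $\pi$, so $\{(L,u):M_K(L-u)\subset W\}=\{(L,u):M_H(L-u)\subset\pi^{-1}(W)\}$ is open by what we have already proved.
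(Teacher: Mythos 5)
Your proof is correct and is essentially the paper's argument written out in full: the paper simply cites the joint continuity of $(L,u,\mu)\mapsto (L-u)(\mu)$ on $A\times E\times H$ and calls the conclusion a standard consequence, and your compactness/contradiction argument (pick minimizers $\mu_n\notin U$, extract a convergent subsequence, pass to the limit in the minimality inequality) is precisely that standard consequence. Your uniform-convergence detour is valid but not even needed, since joint continuity of the evaluation map together with $(L_n,u_n,\mu_n)\to(L_0,u_0,\mu_\infty)$ already gives $(L_n-u_n)(\mu_n)\to(L_0-u_0)(\mu_\infty)$.
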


\begin{proof}
This is a standard consequence of the continuity of the map
$$
A\times E\times H\ni(L,u,\mu)\lmto
(L-u)(\mu)=
L(\mu)-\langle u,\mu\rangle.
$$
\end{proof}

Now let $u_0$ be a point  of $\cO(D,\e,k)$.
For each $L\in D$, there exists a $k$-dimensional
convex set $V\subset K$ such that 
$
M_K(L-u_0)\subset V_{\e}.
$
In other words, the open sets of the form
$$
\{(L,u)\in D\times E \;:\;
M_H(L-u)\subset V_{\e}\}
\subset D\times E,
$$
where $V$ is some $k$-dimensional convex subset of $K$,
cover the compact set $D\times\{u_0\}$.
So there exists a finite subcovering of $D\times\{u_0\}$
by open sets of the form $\Omega_i\times U_i$,
where $\Omega_i$ is an open set in $A$ and 
$U_i\subset \cO(\Omega_i,\e,k)$ is an open set in $E$
containing $u_0$.
We conclude that 
the open set $\cap U_i$ is contained in $\cO(D,\e,k)$,
and contains $u_0$. This ends the proof.

\qed

 \subsection{Dense}\label{dense}\quad
 
 We prove the density of $\cO(A,\e,\dim A)$ in $E$
for $\e>0$.
Let $w$ be a point in $E$. We want to prove that 
$w$ is in the closure of 
$\cO(A,\e,\dim A)$.

\begin{lem}\label{map}
There exists an integer $m$ and a continuous map
$$
T_m=(w_1,\ldots,w_m):K\lto \Rm^m,
$$
with $w_i\in E$
such that 
\begin{equation}\label{diam}
\fa x\in\re^m \qquad \diam T_m^{-1}(x) <\e,
\end{equation}
where the diameter is taken for the distance $d$ on $K$.

% such that the preimage $T_m^{-1}(x)$
% by $T_m$ of each point $x\in \Rm^m$
% has diameter less than $\e$ 
% for the distance $d$ on $K$. 
\end{lem}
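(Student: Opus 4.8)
The plan is to use only two structural facts: that $K$ is a compact metric space (with the fixed metric $d$), and that $E$ separates the points of $K$ by condition (ii), together with the joint continuity of the pairing on $E\times K$ from condition (i). The map $T_m$ we are after is nothing but a finite list of pairings $\nu\lmto\lan w_i,\nu\ran$, and requirement (\ref{diam}) says precisely that the fibers of $T_m$ are small. So the whole content is to select finitely many functionals $w_i\in E$ that jointly separate every pair of points of $K$ lying at a definite positive distance apart, and this is a routine compactness argument. To get a \emph{strict} diameter bound at the end, I would fix the slightly smaller threshold $\e/2$ from the outset.

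First I would introduce the compact set of ``far apart'' pairs,
$$
\Delta:=\{(\eta,\nu)\in K\times K \;:\; d(\eta,\nu)\ge \e/2\},
$$
which is closed in the compact space $K\times K$, hence compact. For each $(\eta,\nu)\in\Delta$ we have $\eta\neq\nu$, so condition (ii) furnishes some $u\in E$ with $\lan u,\eta\ran\neq\lan u,\nu\ran$, and by the continuity of the pairing on $E\times K$ the set
$$
\Omega_u:=\{(\eta',\nu')\in K\times K \;:\; \lan u,\eta'\ran\neq\lan u,\nu'\ran\}
$$
is open and contains $(\eta,\nu)$. As $(\eta,\nu)$ ranges over $\Delta$, the sets $\Omega_u$ form an open cover of the compact set $\Delta$, so finitely many of them, say $\Omega_{w_1},\ldots,\Omega_{w_m}$ with $w_i\in E$, already cover $\Delta$. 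I would then set
$$
T_m:=(\lan w_1,\cdot\ran,\ldots,\lan w_m,\cdot\ran):K\lto\re^m,
$$
identifying each $w_i\in E$ with the continuous function $\nu\lmto\lan w_i,\nu\ran$ on $K$; each component is continuous by condition (i), so $T_m$ is continuous.

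Finally I would check (\ref{diam}): if $\eta,\nu$ lie in one fiber $T_m^{-1}(x)$, then $\lan w_i,\eta\ran=\lan w_i,\nu\ran$ for every $i$, so $(\eta,\nu)$ lies in none of the $\Omega_{w_i}$, hence $(\eta,\nu)\notin\Delta$, i.e. $d(\eta,\nu)<\e/2$. Thus every fiber has diameter at most $\e/2<\e$, which is exactly (\ref{diam}). I expect no genuine obstacle here; the one point deserving care is the passage from ``each pair in a fiber is at distance below the threshold'' to a \emph{strict} bound on the diameter, which is a supremum — this is precisely why working with the threshold $\e/2$ rather than $\e$ is convenient.
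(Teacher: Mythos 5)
Your proof is correct, and it takes a genuinely different (and more direct) route than the paper. The paper first covers the whole complement of the diagonal in $K\times K$ by the open sets $U_w=\{(\eta,\mu):\lan w,\eta-\mu\ran\neq 0\}$ and, using that this complement is separable metrizable (hence Lindel\"of), extracts a \emph{countable} subcover, i.e.\ a sequence $w_1,w_2,\ldots$ in $E$ separating $K$; it then shows by contradiction, via sequential compactness of $K$, that the truncation $T_m=(w_1,\ldots,w_m)$ satisfies \eqref{diam} once $m$ is large. You instead observe that only pairs at distance $\ge\e/2$ need to be separated, that these pairs form a \emph{compact} subset $\Delta$ of $K\times K$, and you extract a \emph{finite} subcover of $\Delta$ in one step; the fiber estimate then follows immediately, with the threshold $\e/2$ giving the strict inequality cleanly. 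Your argument avoids both the Lindel\"of step and the limiting/contradiction argument, so it is shorter and more elementary for the lemma as stated. What the paper's construction buys in exchange is a single sequence $(w_k)$, independent of $\e$, that separates all of $K$, from which one gets maps $T_{m(\e)}$ for every $\e>0$ simultaneously by truncation; but since the lemma is invoked for one fixed $\e$ at a time in the density argument of Section \ref{dense}, this extra uniformity is not needed, and your proof fully serves the purpose.
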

\begin{proof}
In $K\times K$,
to each element $w\in E$ we associate the open set
$$
U_w=\{\,(\eta,\mu)\in K\times K \; :\;
\langle w,\eta-\mu\rangle\neq 0\,\}.
$$
Since $E$ separates $K$, the open sets $U_w,w\in E$ cover
the complement of the diagonal in $K\times K$.
Since this complement is 
open in the separable  metrizable set $K\times K$,
 we can extract a countable
subcovering from this covering.
So we have a sequence 
$U_{w_k}$,
with $w_k\in E$, which covers the complement of the diagonal
in $K\times K$.
This amounts to say that the sequence $w_k$ 
separates $K$.
Defining $T_m=(w_1,\ldots,w_m)$, we have to prove that
\eqref{diam} holds for 
$m$ large enough.
Otherwise, we would have 
two sequences $\eta_m$ and $\mu_m$ in $K$
such that 
$$T_m(\mu_m)=T_m(\eta_m)\quad \text{ and }\quad
d(\mu_m,\eta_m)\ge\e.
$$
By extracting a subsequence, we can assume
that the sequences $\mu_m$ and $\eta_m$ have different limits
$\mu$ and $\eta$, which satisfy $d(\eta, \mu)\ge\e$.
Take $m$ large enough, so that 
$T_m(\eta)\neq T_m(\mu)$.
Such a value of $m$ exists because the linear forms $w_k$
separate $K$.
We have that 
$$
T_m(\mu_k)=T_m(\eta_k)\quad\text{ for }\quad k\geq m.
$$
Hence at the limit $T_m(\eta)=T_m(\mu)$. 
This is a contradiction.

\end{proof}

Define the function $F_m:A\times\re^m\to\re\cup\{+\infty\}$ as
$$
F_m(L,x):=\min_{\stackrel{\scriptstyle\mu\in H}
                  {T_m\circ \pi(\mu)=x}
                 }
(L-w)(\mu),
$$
when $x\in T_m(\pi(H))$ and $F_m(L,x)=+\infty$ if
$x\in\re^m\setminus T_m(\pi(H))$.
For $y=(y_1,\ldots,y_m)\in \Rm^m$,
let
$$
M_m(L,y) :=\argmin_{x\in\re^m}\big[F_m(L,x)-y\cdot x\big]\subset \re^m
$$
be the set of points which minimize
the function $x\lmto F_m(L,x)-y\cdot x$.
We have that
$$
M_K\big(L-w-y_1w_1-\cdots-y_mw_m\big)
\subset T_m^{-1}(M_m(L,y)).
$$
Let
$$
\cO_m(A,\dim A):=\{\;y\in\re^m\;|\;
\fa L\in A\;:\; \dim M_m(L,y)\le \dim A\;\}.
$$
From Lemma~\ref{map} it follows that
$$
y\in\cO_m(A,\dim A)\quad \then \quad
w+y_1w_1+\cdots+y_mw_m\in\cO(A,\e,\dim A).
$$
Therefore, in order to prove that $w$ is in the closure
of $\cO(A,\e,\dim A)$, it is enough to prove that
$0$ is in the closure of  $\cO_m(A,\dim A)$,
which follows from the next proposition.

\bigskip

\begin{prop}
The set $\cO_m(A,\dim A)$ 
 is dense in $\Rm^m$.
\end{prop}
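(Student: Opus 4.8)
The plan is to reduce the statement to a property of a single convex function on $\re^{m+n}$, where $n=\dim A$, by absorbing the parameter $L\in A$ into a partial Legendre transform. Write the affine family as $L_a=L_0+\sum_{j=1}^n a_j\ell_j$ with $\ell_j\in H^*$ and $a\in\re^n$, and introduce the affine map
$$
\widetilde T=(w_1,\dots,w_m,\ell_1,\dots,\ell_n):H\lto \re^{m+n},\qquad
\widetilde T(\mu)=(T_m\circ\pi(\mu),\ell_1(\mu),\dots,\ell_n(\mu)).
$$
Setting
$$
\widetilde F(x,b):=\min_{\stackrel{\scriptstyle\mu\in H}{\widetilde T(\mu)=(x,b)}}(L_0-w)(\mu)
$$
on $\widetilde T(H)$ and $\widetilde F=+\infty$ elsewhere, the function $\widetilde F$ is convex and lower semicontinuous, being the value function of the minimization of the \emph{fixed} affine function $L_0-w$ over the linear slices of the compact convex set $H$. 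Splitting the minimization defining $F_m$ according to the value $b=\ell(\mu)$ yields the identity
$$
F_m(L_a,x)=\min_{b\in\re^n}\big[\widetilde F(x,b)+a\cdot b\big].
$$
First I would verify this identity together with the convexity and lower semicontinuity of $\widetilde F$; this is routine.

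Next I would relate $M_m$ to a single subdifferential. Let $G:=\widetilde F^{\,*}$ be the convex conjugate of $\widetilde F$ on $\re^{m+n}$. By the identity above, minimizing $x\mapsto F_m(L_a,x)-y\cdot x$ amounts to jointly minimizing $(x,b)\mapsto \widetilde F(x,b)-\langle(y,-a),(x,b)\rangle$; hence if $x\in M_m(L_a,y)$ and $b$ realizes the inner minimum, then $(x,b)$ minimizes $\widetilde F-\langle(y,-a),\cdot\rangle$, that is $(x,b)\in\partial G(y,-a)$. Writing $P:\re^{m+n}\to\re^m$ for the projection onto the first $m$ coordinates, this gives $M_m(L_a,y)\subset P\big(\partial G(y,-a)\big)$, and therefore
$$
\dim M_m(L_a,y)\le \dim \partial G(y,-a)\qquad\text{for every }a\in\re^n.
$$

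Finally I would invoke the structure of the singular set of the convex function $G$. By a classical result, for a convex function on $\re^{d}$ the set of points whose subdifferential has dimension at least $k$ has Hausdorff dimension at most $d-k$. With $d=m+n$ and $k=n+1$ this shows that $\Sigma:=\{z\in\re^{m+n}:\dim\partial G(z)\ge n+1\}$ has Hausdorff dimension at most $(m+n)-(n+1)=m-1$. The complement of $\cO_m(A,\dim A)$ consists of those $y$ for which $(y,-a)\in\Sigma$ for some $a$, hence is contained in $P(\Sigma)$; since $P$ is Lipschitz it does not raise Hausdorff dimension, so $P(\Sigma)$ has Hausdorff dimension at most $m-1<m$ and thus has empty interior. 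Consequently $\re^m\setminus P(\Sigma)\subset\cO_m(A,\dim A)$ is dense. The main obstacle is precisely this last ingredient: establishing (or correctly citing) the dimension bound on the large-subdifferential set of a convex function, together with the technical point that $\widetilde F$ is a genuine proper lower semicontinuous convex function, so that $\argmin(\widetilde F-\langle p,\cdot\rangle)=\partial G(p)$. Once these are in place, the projection and dimension count close the argument.
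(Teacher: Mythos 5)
Your proposal is correct and is essentially the paper's own proof: your $G=\widetilde F^{\,*}$ coincides (up to the reparametrization $(L_a,y)\leftrightarrow(y,-a)$) with the paper's partial Legendre transform $G_m$, since $G(y,-a)=\max_{\mu\in H}\langle w+y_1w_1+\cdots+y_mw_m,\mu\rangle-L_a(\mu)=G_m(L_a,y)$, and both arguments then invoke the same Hausdorff-dimension bound for the set where a convex function of $m+\dim A$ variables has subdifferential of dimension at least $\dim A+1$, followed by a Lipschitz projection onto $\re^m$. The only difference is cosmetic: you obtain the key inclusion $M_m(L_a,y)\subset P\big(\partial G(y,-a)\big)$ via the Fenchel equivalence for the proper lower semicontinuous convex function $\widetilde F$, whereas the paper phrases the same bound by identifying $M_m(L,y)$ with the partial subdifferential of $z\mapsto G_m(L,z)$ at $y$ and comparing its dimension with that of the full subdifferential -- your version spells out that extension step slightly more explicitly.
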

\begin{proof}
Consider the Legendre transform of $F_m$ 
with respect to the second variable,
\begin{align*}
G_m(L,y)&=\max_{x\in \Rm^m} y\cdot x-F_m(L,x)
\\
&=
\max_{\mu\in H}
\langle w+y_1w_1+\cdots+y_mw_m,\mu\rangle - L(\mu).
\end{align*}
It follows from this second expression that the function
$G_m$ is convex and finite-valued, hence continuous
on $A\times \Rm^m$.

Consider the set $\tilde \Sigma \subset A\times \Rm^m$
of points $(L,y)$ such that $\dim \partial G_m(L,y)\geq \dim A+1$,
where $\partial G_m$ is the subdifferential of $G_m$.
It is known, see the appendix,
that this set has Hausdorff dimension at most 
$$
(m+\dim A)-(\dim A+1)=m-1.
$$
Consequently, the projection $\Sigma$ of the set $\tilde \Sigma$
on the second factor $\Rm^m$
also has Hausdorff dimension at most $m-1$.
Therefore, the complement of $\Sigma$ is dense in $\Rm^m$.
So it is enough to prove that
$$
y\notin\Si \quad\then\quad \fa L\in A\;:\quad
\dim M_m(L,y)\leq \dim A.
$$
% So it is enough to prove that, if $y$ is not in $\Sigma$,
% then the inequality
% $\dim M_m(L,y)\leq \dim A$
%  holds for all $L\in A$.
 Since we know by definition of $\Sigma$
 that $\dim \partial G_m(L,y)\leq \dim A$,
 it is enough to observe that
 $$
 \dim M_m(L,y)\leq \dim \partial G_m(L,y).
 $$
The last inequality follows from the fact that
the set $M_m(L,y)$
is the subdifferential of the convex function
$$
\Rm^m\ni z\lmto G_m(L,z)
$$
at the point $y$.

\end{proof}

 \section{Application to Lagrangian dynamics}

 Let $C$ be the set of continuous functions $f:\TTM\to\re$ with linear 
 growth, i.e.
 $$
 \lV f\rV_\ell:=\sup_{(t,\th)\in \TTM}\frac{|f(t,\th)|}{1+|\th|}<+\infty,
 $$
 endowed with the norm $\lV\cdot\rV_\ell$.

 We apply Theorem~\ref{abstract} to the following setting:

 \begin{itemize}
 \item $F=C^*$ is the vector space of continuous linear functionals
       $\mu:C\to\re$ provided with the weak-$\star$
       topology. Recall that  
       $$
       \lim_n\mu_n=\mu\quad\Longleftrightarrow\quad
       \lim_n\mu_n(f)=\mu(f), \quad\fa f\in C.
       $$
 \item 
 %correction
 $E=C^\infty(\T\times M,\re)$ provided with the $C^{\infty}$ topology.
 \item $G$ is the vector space of finite Borel signed measures on
  $\T\times M$,
 or equivalently the set of continuous linear forms on
 $C^0(\TM,\Rm)$,  provided with the  weak-$\star$ topology.
 \item The pairing $E\times G\to\re$ is given by integration:
       $$
       \lan u,\nu\ran =\int_{\TM}u\;d\nu.
       $$

  \item
The continuous linear map $\pi:F\lto G$  is induced by the projection
$\TTM\lto \T\times M $.

 \item The compact $K\subset G$ is the set of Borel probability measures
       on $\TM$, provided with the weak-$\star$ topology. Observe that
       $K$ is separated by $E$.      
     
 \item The compact 
 %correction
 $H_n\subset F$ is the set of holonomic
       probability measures which are supported on  
       $$
       B_n:=\{(t,\th)\in \TTM\;|\;|\th|\le n\;\}.
       $$
\end{itemize}

 Holonomic probabilities are defined as follows:   
 Given a $C^1$ curve $\ga:\re\to M$ of period $T\in\na$ define the
 element  $\mu_\ga$ of $F$ by
 $$
 \lan f,\mu_\ga \ran=\frac 1T\int_0^Tf(s,\ga(s),\dga(s))\;ds
 $$
 for each $f\in C$. Let 
%correction 
 $$
 \Ga:=\{\;\mu_\ga\;|\;\ga\in C^1(\re,M)\text{ is periodic
 of integral period }\}\subset F.
 $$
 The set $\cH$ of holonomic probabilities is the closure of $\Ga$ in
 $F$. One can see that $\cH$ is convex (cf.~Ma\~n\'e~\cite[prop. 1.1(a)]{Ma6}).
 The elements $\mu$ of $\cH$ satisfy $\lan 1, \mu\ran=1$
 therefore we have $\pi(\cH)\subset K$.
 
 Note that each Tonelli Lagrangian $L$ gives rise
 to an element of $H_n^*$.

 Let $\fM(L)$ be the set of minimizing measures for $L$ and let
 $\supp\fM(L)$ be the union of their supports.
 Recalling that we have defined $M_{H_n}(L)$
 as the set of measures $\mu\in H_n$ which minimize 
 the action $\int L\,d\mu$ on $H_n$, we have:

 \begin{lem}\label{measures}
 If $L$ is a Tonelli lagrangian then there exists $n\in\na$ such that 
 $$
 \dim\pi(M_{H_n}(L))=\dim\fM(L).
 $$
 \end{lem}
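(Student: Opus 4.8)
The plan is to show that, once $n$ is large enough, the holonomic minimizers over $H_n$ are exactly the invariant minimizing measures, and then to use Mather's graph property to check that the projection $\pi$ does not lower the dimension of this set. Concretely I would carry out three steps: an a priori compactness step placing $\fM(L)$ inside some $H_{n_0}$, a variational step identifying $M_{H_n}(L)$ with $\fM(L)$, and a geometric step showing $\pi$ is injective on $\fM(L)$.

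First I would invoke the a priori compactness of minimizing measures: fiberwise superlinearity forces the union $\supp\fM(L)$ of the supports of all minimizing measures to be a \emph{compact} subset of $\TTM$, hence contained in $B_{n_0}$ for some $n_0\in\na$. Since invariant probability measures with compact support are holonomic (Ma\~n\'e \cite[Prop.~1.1]{Ma6}), this gives $\fM(L)\subset H_n$ for every $n\ge n_0$. Next I would identify $M_{H_n}(L)$ with $\fM(L)$ for such $n$. Ma\~n\'e's theorem \cite{Ma6} asserts that $\inf_{\mu\in\cH}A_L(\mu)$ equals the minimal action $c(L)$ of invariant measures, and that every holonomic measure attaining this infimum is an invariant minimizing measure. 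Because $H_n\subset\cH$ we have $\min_{H_n}A_L\ge c(L)$, while $\fM(L)\subset H_n$ forces $\min_{H_n}A_L\le c(L)$; hence $\min_{H_n}A_L=c(L)$. Any $\mu\in M_{H_n}(L)$ therefore realizes the holonomic infimum and so lies in $\fM(L)$, while conversely every element of $\fM(L)\subset H_n$ attains $\min_{H_n}A_L$. Thus $M_{H_n}(L)=\fM(L)$, and in particular $\pi(M_{H_n}(L))=\pi(\fM(L))$.

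Finally I would show that $\pi$ is injective on $\fM(L)$, so that it preserves the affine dimension of this convex set. Here Mather's graph theorem \cite{Mat5} enters: the Mather set $\supp\fM(L)$ projects injectively onto its image in $\TM$, with continuous (indeed Lipschitz) inverse. Hence there is a continuous map $\sigma$ with $\sigma\circ\pi=\mathrm{id}$ on $\supp\fM(L)$, so that every $\mu\in\fM(L)$ satisfies $\mu=\sigma_{*}\pi_{*}\mu$; consequently $\pi_{*}$ is injective on $\fM(L)$. Being the restriction of the linear map $\pi$, it is affine and injective, and since a compact convex set spans its affine hull, an affine injection preserves its dimension, giving $\dim\pi(\fM(L))=\dim\fM(L)$. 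Combining the three steps yields $\dim\pi(M_{H_n}(L))=\dim\fM(L)$ for all $n\ge n_0$.

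The main obstacle is the second step: the identity $M_{H_n}(L)=\fM(L)$ rests on the nontrivial fact that minimizing \emph{holonomic} measures are automatically \emph{invariant}, which is the heart of Ma\~n\'e's variational reformulation. The delicate point is that one should compare the minimal \emph{values} of $A_L$ on $H_n$ and on $\cH$ rather than argue directly with the minimizers, because an a priori minimizer on $H_n$ might concentrate on the boundary $|\th|=n$ of $B_n$; the value comparison, available precisely because $\fM(L)\subset H_n$, sidesteps any such boundary phenomenon. By contrast the first and third steps are routine invocations of superlinear a priori compactness and of the graph theorem respectively.
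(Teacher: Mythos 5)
Your proposal is correct and follows essentially the same three-step structure as the paper's proof: Mather's compactness of $\supp\fM(L)$ to get $\fM(L)\subset H_n$, Ma\~n\'e's identification of minimizing measures with the minimizers of $A_L$ over holonomic measures to get $M_{H_n}(L)=\fM(L)$, and Mather's graph theorem to get injectivity of $\pi$ on $\fM(L)$. The only difference is that you spell out details the paper leaves implicit, namely the value-comparison argument ruling out boundary minimizers on $H_n$ and the continuous-section argument upgrading injectivity on supports to injectivity of the pushforward on measures.
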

 
 \begin{proof}\quad
 
 Birkhoff theorem
 implies that $\fM(L)\subset \cH$ (cf.~Ma\~n\'e~\cite[prop.~1.1.(b)]
 {Ma6}).
 In~\cite[Prop.~4, p.~185]{Mat5} Mather proves that $\supp\fM(L)$ 
 is compact, therefore
 $
 \fM(L)\subset H_n
 $
for some $n\in\na$.
 
  In~\cite[\S 1]{Ma6} Ma\~n\'e proves that minimizing measures 
 are also all the minimizers of the action functional
 $
 A_L(\mu)=\int L\;d\mu
 $
 on the set of holonomic measures, therefore 
 $
 \fM(L)=M_{H_n}(L)
 $
 for some $n\in\na$.

 In~\cite[Th.~2, p.~186]{Mat5} Mather proves that  the restriction 
 \;$\supp\fM(L)\to M$\; of the projection $TM\to M$ is injective. 
 Therefore the linear map $\pi:\fM(L)\to G$ is injective,
 so that $\dim \pi (  M_{H_n}(L))=\dim\pi(\fM(L))=\dim\fM(L).$
 \end{proof}

 \noindent{\bf Proof of Theorem \ref{main}.}
 
 Given $n\in\na$ apply Theorem~\ref{abstract} and obtain a 
 residual subset $\cO_n(A)\subset E$ such that
 $$
 L\in A,\quad  u\in \cO_n(A)\;\Longrightarrow
 \qquad \dim\pi(M_{H_n}(L-u))\le\dim A.
 $$
 Let 
 $\cO(A)=\cap_n\cO_n(A)$. By the Baire property $\cO(A)$ 
 is residual. We have that 
 $$
 L\in A,\quad  u\in \cO(A),\quad  n\in \na\;\Longrightarrow
 \qquad \dim\pi(M_{H_n}(L-u))\le\dim A .
 $$
 Then by Lemma~\ref{measures}, $\dim\fM(L-u)\le\dim A$ for
  all $L\in A$ and all $u\in \cO(A)$.
 \qed

\begin{appendix}

\section{Convex Functions}

Given a convex function $f:\re^n\to\re$ and $x\in\re^n$, define its subdifferential as
$$
\partial f(x):=\{\,\ell:\re^n\to\re\;\text{ linear }|\; f(y)\ge f(x)+\ell(y-x)\,,\;\forall y\in\re^n\;\}.
$$
Then the sets $\partial f(x)\subset\re^n$ are convex. If $k\in\na$, let
$$
\Si_k(f):=\{\,x\in\re^n\;|\;\dim \partial f(x)\ge k\;\}.
$$
The following result is standard. 
\begin{Proposition}\label{cxprop}
If $f:\re^n\to\re$ is a convex function then for all $0\le k\le n$ the Hausdorff dimension $HD(\Si_k(f))\le n-k$.
\end{Proposition}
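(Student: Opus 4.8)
The plan is to show that $\Si_k(f)$ is $(n-k)$-rectifiable, in the sense that it is contained in a countable union of sets each of which maps into some $(n-k)$-dimensional subspace by the orthogonal projection, with this projection having a Lipschitz inverse on the set. Since Lipschitz maps do not increase Hausdorff dimension, a set that projects into an $(n-k)$-plane by a map admitting a Lipschitz inverse has $HD\le n-k$; countable stability of Hausdorff dimension then gives $HD(\Si_k(f))\le n-k$. The starting observation is that $\partial f(x)$ is a nonempty compact convex set, so if $x\in\Si_k(f)$ its affine hull has dimension at least $k$, and hence its relative interior contains a closed ball of some radius $r_x>0$ lying in a $k$-plane. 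Writing $V_x$ for the associated $k$-dimensional linear subspace and $q_x$ for the center, we then have $q_x+r_x\,v\in\partial f(x)$ for every unit vector $v\in V_x$.

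First I would reduce to a single piece. Fix a countable dense family of $k$-dimensional subspaces $V_0$ in the Grassmannian, a countable dense family of points $q_0\in\re^n$, a countable family of rational radii $r>0$, and a small angle threshold $\th$ and a small ratio $\eta$ to be chosen later. For each admissible triple $(V_0,q_0,r)$ let $\Si^{V_0,q_0,r}$ be the set of $x\in\Si_k(f)$ such that $r_x\ge r$, the plane $V_x$ lies within angle $\th$ of $V_0$, and $|q_x-q_0|\le\eta r$. Every point of $\Si_k(f)$ lies in some such piece, so it suffices to bound the Hausdorff dimension of one piece.

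The heart of the argument is a monotonicity estimate. For $x,x'$ in one piece and a unit vector $v\in V_0$, choose nearby unit vectors in $V_x$ and $V_{x'}$; then $q_x\pm r v$ and $q_{x'}\pm r v$ lie (up to an $O(r\th)$ error) in $\partial f(x)$ and $\partial f(x')$ respectively. Applying the monotonicity inequality $(p-p')\cdot(x-x')\ge 0$ for subgradients $p\in\partial f(x)$, $p'\in\partial f(x')$ to the opposite sign choices and subtracting gives
$$
(q_x-q_{x'})\cdot(x-x')\;\ge\; 2r\,\big|v\cdot(x-x')\big|-O(r\th)\,\|x-x'\|.
$$
Taking the supremum over unit $v\in V_0$ and bounding the left side by $|q_x-q_{x'}|\,\|x-x'\|\le 2\eta r\,\|x-x'\|$, I obtain $\|P_{V_0}(x-x')\|\le\tfrac12\|x-x'\|$ once $\th$ and $\eta$ are small, where $P_{V_0}$ is orthogonal projection onto $V_0$. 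Hence $\|P_{V_0^{\perp}}(x-x')\|\ge\tfrac{\sqrt3}{2}\|x-x'\|$, so the $1$-Lipschitz projection onto the $(n-k)$-plane $V_0^{\perp}$ is injective with Lipschitz inverse on the piece, giving $HD(\Si^{V_0,q_0,r})\le n-k$. Summing over the countable family yields $HD(\Si_k(f))\le n-k$.

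The main obstacle is the bookkeeping that makes the countable decomposition legitimate while keeping the approximate-orthogonality errors controlled: passing from the exact estimate in the varying plane $V_x$ to the fixed reference plane $V_0$, and checking that replacing a vector of $V_0$ by nearby vectors of $V_x$ and $V_{x'}$ perturbs the monotonicity inequality only by a term of order $r\th\,\|x-x'\|$, which can be absorbed once $\th$ and the ratio $|q_x-q_0|/r$ are chosen small. The convexity and monotonicity input, together with the bi-Lipschitz projection argument, are otherwise routine.
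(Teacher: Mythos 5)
Your proof is correct, but it takes a genuinely different route from the paper. The paper uses the Alberti--Ambrosio trick: replace $f$ by $f+\lv x\rv^2$ (which translates each $\partial f(x)$ and so leaves $\Si_k$ unchanged), observe that strong convexity makes the inverse of the subdifferential a globally Lipschitz map $F:\re^n\to\re^n$, and then note that the set $A_k$ of points with at least $k$ rational coordinates has Hausdorff dimension $n-k$ and meets every $k$-dimensional convex set, whence $\Si_k\subset F(A_k)$ and the dimension bound follows in one stroke. Your argument instead works directly with monotonicity of $\partial f$, with no regularization: you decompose $\Si_k$ into countably many pieces indexed by a rational radius, a rational center, and a plane from a countable dense subset of the Grassmannian, and on each piece the two-sided application of $(p-p')\cdot(x-x')\ge 0$ at the antipodal points $q_x\pm r v_1$, $q_{x'}\mp r v_2$ of the inscribed $k$-balls yields $\lV P_{V_0}(x-x')\rV\le \tfrac12\lV x-x'\rV$ once $\th$ and $\eta$ are small, so the projection onto $V_0^{\perp}$ is bi-Lipschitz on the piece. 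The bookkeeping you flag does close: given unit $v\in V_0$, normalizing $P_{V_x}v$ produces $v_1\in V_x$ with $\lv v_1-v\rv=O(\th)$, so the error in the monotonicity estimate is indeed $O(r\th)\lV x-x'\rV$ and is absorbed by choosing, say, $\th=\eta=1/10$. What your approach buys is strictly stronger structural information: each piece is a Lipschitz graph over an $(n-k)$-plane, i.e.\ $\Si_k$ is countably $(n-k)$-rectifiable, which is in the spirit of the finer structure theorems of Alberti and Zaj\'{\i}\v{c}ek that the paper cites but does not need. What the paper's argument buys is brevity: no countable decomposition, no Grassmannian, and the whole proof fits in a few lines once the Lipschitz inverse $F$ and the set $A_k$ are in hand.
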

We recall here an elegant proof due to Ambrosio and Alberti,
see \cite{AA}. Note that much more can be said on the structure
of $\Sigma_k$, see \cite{alberti,Z} for example.

By adding $\lv x\rv^2$ if necessary
(which does not change $\Si_k$)
 we can assume that $f$ is superlinear and that
\begin{equation}\label{ellineq}
f(y)\ge f(x)+\ell(y-x)+\tfrac 12\,\lv y-x\rv^2\quad
\forall x,y\in\re^n,\quad\forall\ell\in\partial f(x).
\end{equation}

\begin{Lemma}
 \qquad $\ell\in\partial f(x)$, \quad $\ell'\in\partial f(x')$\quad $\then$ \quad $\lv x-x'\rv\le \lV\ell-\ell'\rV$.
\end{Lemma}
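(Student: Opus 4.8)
The plan is to exploit the strengthened convexity inequality (\ref{ellineq}), which was arranged at the outset precisely so that each subgradient controls $f$ from below by a sharp quadratic. The idea is to write that inequality twice---once with base point $x$ and test point $x'$, and once with the roles of $x$ and $x'$ exchanged---and then add the two. The quadratic terms are what will eventually produce the Lipschitz bound.

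Concretely, first I would use $\ell\in\partial f(x)$ with the choice $y=x'$ in (\ref{ellineq}) to record
$$
f(x')\ge f(x)+\ell(x'-x)+\tfrac 12\,\lv x'-x\rv^2,
$$
and, symmetrically, use $\ell'\in\partial f(x')$ with $y=x$ to record
$$
f(x)\ge f(x')+\ell'(x-x')+\tfrac 12\,\lv x-x'\rv^2.
$$
Adding these two inequalities, the terms $f(x)$ and $f(x')$ cancel, and regrouping the two linear contributions as $(\ell-\ell')(x'-x)$ leaves
$$
0\ge (\ell-\ell')(x'-x)+\lv x-x'\rv^2,
$$
that is, $\lv x-x'\rv^2\le (\ell'-\ell)(x'-x)$.

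To finish, I would bound the right-hand side by Cauchy--Schwarz, the norm $\lV\cdot\rV$ on linear forms being dual to $\lv\cdot\rv$, so that $(\ell'-\ell)(x'-x)\le \lV\ell-\ell'\rV\,\lv x'-x\rv$. Combining with the previous line gives $\lv x-x'\rv^2\le \lV\ell-\ell'\rV\,\lv x-x'\rv$, and dividing by $\lv x-x'\rv$ when it is nonzero yields the asserted inequality $\lv x-x'\rv\le\lV\ell-\ell'\rV$; the case $x=x'$ is trivial. I do not expect a genuine obstacle, since all the content is already packaged into the quadratic remainder of (\ref{ellineq})---this is exactly why the reduction to a superlinear, strongly convex $f$ was made before the lemma---and the remainder is the standard strong-monotonicity computation for subdifferentials. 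The only points requiring minor care are the bookkeeping of signs when summing and the separate (trivial) treatment of $x=x'$.
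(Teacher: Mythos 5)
Your proposal is correct and is essentially identical to the paper's proof: both apply the strengthened inequality (\ref{ellineq}) twice with the roles of $x$ and $x'$ exchanged, add to cancel the function values and obtain the strong monotonicity bound $\lv x-x'\rv^2\le(\ell-\ell')(x-x')$, then conclude via the dual-norm (Cauchy--Schwarz) estimate. The only cosmetic difference is that you explicitly note the trivial case $x=x'$ before dividing, which the paper leaves implicit.
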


\begin{proof}

From inequality~\eqref{ellineq} we have that 
\begin{align*}
f(x')&\ge f(x)+\ell(x'-x)\;+\,\tfrac 12\,\lv x'-x\rv^2,\\
f(x)&\ge f(x')+\ell'(x-x')+\tfrac 12\lv x-x'\rv^2.
\end{align*}
Then 
\begin{gather}
0\ge (\ell'-\ell)(x-x')+\lv x-x'\rv^2 \\
\lV \ell-\ell'\rV \,\lv x-x'\rv\ge (\ell-\ell')(x-x')\ge\lv x-x'\rv^2.
\end{gather}
Therefore $\lV\ell-\ell'\rV\ge\lv x-x'\rv$.
\end{proof}

Since $f$ is superlinear, the subdifferential $\partial f$ 
is surjective and we have:
\begin{Corollary}
There exists a Lipschitz function $F:\re^n\to\re^n$ such that
 $$
 \ell\in\partial f(x) \quad \then \quad x=F(\ell).
 $$ 
\end{Corollary}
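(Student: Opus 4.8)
The plan is to realize $F$ as a set-theoretic inverse of the subdifferential and to read its Lipschitz constant directly off the preceding Lemma. The two things I need are that the assignment $\ell\mapsto x$, defined by the relation $\ell\in\partial f(x)$, is a genuine function on all of $\re^n$, and that this function is Lipschitz.

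For well-definedness I would first check that $\partial f$ is single-valued, in the sense that two distinct points cannot share a subgradient: if $\ell\in\partial f(x)$ and $\ell\in\partial f(x')$, then the Lemma applied with $\ell'=\ell$ gives $\lv x-x'\rv\le\lV\ell-\ell\rV=0$, so $x=x'$. Next I would use superlinearity to obtain surjectivity, which is the one place where the hypothesis genuinely enters: for a fixed linear form $\ell$, the function $x\mapsto f(x)-\ell(x)$ is again superlinear, hence coercive, and therefore attains a global minimum at some $x_0$; minimality gives $f(y)-\ell(y)\ge f(x_0)-\ell(x_0)$ for every $y$, which is exactly the subgradient inequality $\ell\in\partial f(x_0)$. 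Combining these two facts, for each $\ell\in\re^n$ there is a unique $x$ with $\ell\in\partial f(x)$, and I would define $F(\ell)$ to be this $x$. The required implication $\ell\in\partial f(x)\then x=F(\ell)$ then holds by construction.

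The Lipschitz estimate is immediate from the Lemma: if $F(\ell)=x$ and $F(\ell')=x'$, then $\ell\in\partial f(x)$ and $\ell'\in\partial f(x')$, so $\lv F(\ell)-F(\ell')\rv=\lv x-x'\rv\le\lV\ell-\ell'\rV$, that is, $F$ is $1$-Lipschitz. I expect no serious obstacle here; the only step with content is the surjectivity of $\partial f$, and even that reduces to the standard fact that a coercive convex function attains its minimum, with the coercivity supplied by superlinearity.
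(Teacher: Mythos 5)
Your proposal is correct and is essentially the argument the paper intends: the paper's proof of the Corollary consists of the single remark that superlinearity makes $\partial f$ surjective, with well-definedness and the $1$-Lipschitz bound read off from the preceding Lemma exactly as you do. Your only addition is to spell out the surjectivity step (coercivity of $x\mapsto f(x)-\ell(x)$ and the subgradient inequality at its minimizer), which the paper leaves implicit.
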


\noindent{\bf Proof of Proposition~\ref{cxprop}:}

Let $A_k$ be a set with $HD(A_k)=n-k$ such that $A_k$ intersects any convex subset of dimension 
$k$. For example 
$$
A_k=\{ x\in\re^n\, |\, x \text{ has at least $k$ rational coordinates }\}.
$$
Observe that 
$$
x\in\Si_k\quad \then \quad \partial f(x) \text{ intersects } A_k 
\quad \then \quad x\in F(A_k). 
$$
Therefore $\Si_k\subset F(A_k)$.
Since $F$ is Lipschitz,  we have that $HD(\Si_k) \le HD(A_k)=n-k$.
\qed
\end{appendix}

 \nocite{Mat7}
 \nocite{Ma3}

    \bibliographystyle{amsplain}
    \bibliography{biblio}

 %\def\cprime{$'$} \def\cprime{$'$} \def\cprime{$'$} \def\cprime{$'$}
 %\providecommand{\bysame}{\leavevmode\hbox to3em{\hrulefill}\thinspace}
 %\providecommand{\MR}{\relax\ifhmode\unskip\space\fi MR }
 % \MRhref is called by the amsart/book/proc definition of \MR.
% \providecommand{\MRhref}[2]{%
%   \href{http://www.ams.org/mathscinet-getitem?mr=#1}{#2}
% }
% \providecommand{\href}[2]{#2}

   \end{document}